\documentclass[12pt,reqno]{amsart}
\usepackage{amssymb}
\setlength{\voffset}{-1cm} \setlength{\hoffset}{-1.37cm}
\setlength{\textwidth}{6.3in} \setlength{\textheight}{8.4in}
\setlength{\abovedisplayshortskip}{3mm}
\setlength{\belowdisplayshortskip}{3mm}

\vskip.1in

\usepackage{longtable}
\theoremstyle{plain}
\newtheorem{theorem}{Theorem}
\newtheorem{lemma}{Lemma}

\newtheorem{proposition}{Proposition}[section]
\theoremstyle{proof}
\theoremstyle{definition}

\theoremstyle{remark}

\theoremstyle{lamma}

\numberwithin{equation}{section}
\numberwithin{lemma}{section}
\numberwithin{theorem}{section}

\usepackage{amssymb, amsmath, amsthm}
\usepackage[breaklinks]{hyperref}
\theoremstyle{thmrm}

\usepackage{graphicx}
\begin{document}
\title[]{Irreducible Modules for the Loop of Derivations of Rational Quantum Torus}
\author[]{Santanu Tantubay}
\address{Harish-Chandra Research Institute, HBNI,
Chhatnag Road, Jhunsi, Prayagraj 211 019, India.}
\email{santanutantubay@hri.res.in,tantubaysantanu@gmail.com}
 
\keywords{Quantum Torus, Loop of Derivations}
\subjclass [2010]{17B67, 17B66}
\maketitle
\begin{abstract}
Let $\mathbb{C}_q$ be a rational quantum torus with the matrix $q$. Let $Der (\mathbb{C}_q)$ be the Lie algebra of derivations of $\mathbb{C}_q$. In this paper we consider the Lie algebra $(\mathbb{C}_q\rtimes Der(\mathbb{C}_q))\otimes B$, where $B$ is commutative associative unital algebra over $\mathbb{C}$ and classify its irreducible modules with finite dimensional weight spaces.   
\end{abstract}
\section{Introduction}
Let $q=(q_{i,j})$ be a $n\times n$ matrix such that $q_{i,i}=1$ and $q_{i,j}=q_{j,i}^{-1}$ for $1\leq i,j\leq n$. Let $\mathbb{C}_q$ be the Laurent polynomial ring in $n$ non-commutative variables $t_1, \dots, t_n$ with the conditions $t_it_j=q_{i,j}t_jt_i$ and all $q_{ij}$'s are roots of unity. It is easy to see that if $q$ is the identity matrix, then $\mathbb{C}_q$ becomes the Laurent polynomial ring $A=\mathbb{C}[t_1^{\pm 1},t_2^{\pm 1},\dots, t_n^{\pm 1}]$.  Let $Der (\mathbb{C}_q)$ be the Lie algebra of derivations of $\mathbb{C}_q$. Therefore $Der(\mathbb{C}_q)$ naturally acts on $\mathbb{C}_q$ and hence  $\mathbb{C}_q\rtimes Der(\mathbb{C}_q)$ becomes a Lie algebra. In \cite{[5]}, S. E. Rao classified irreducible modules with finite dimensional weight spaces for the Lie algebra $A\rtimes Der(A)$ with associative action of $A$. In \cite{[4]}, Priyansu Chakraborty, S. Eswara Rao consider the loop of $A\rtimes Der(A)$ and classified irreducible modules with finite dimensional weight spaces when $A\otimes B$ acts associatively. In \cite{[3]}, S. E. Rao, Punita Batra, Sachin S. Sharma consider the Lie algebra $\mathbb{C}_q\rtimes Der(\mathbb{C}_q)$ and they classified all irreducible modules with finite dimensional weight spaces. Under some conditions these modules are of the form $V\otimes \mathbb{C}_q$, where $V$ is a finite dimensional irreducible $gl_n$-module. In this paper we consider the loop of $\mathbb{C}_q\rtimes Der(\mathbb{C}_q)$ and prove that under the similar condition all irreducible modules with finite dimensional weight spaces are of the form $V\otimes \mathbb{C}_q$. For more about the representations of loop of some well known Lie algebras one can see the referrences of \cite{[4]}.
\vspace{5mm}

The paper is organised as follows. In Section \ref{Section 2} we begin with definitions and properties of the rational quantum torus $\mathbb{C}_q$. We define the Lie algebra $\tau$, the loop of $\mathbb{C}_q\rtimes Der(\mathbb{C}_q)$. Then we racall a theorem of \cite{[3]} and construct an irreducible module $F^{\alpha}(V,\phi)$ for $\tau$. In Section \ref{Section 3} we consider $V$ an irreducible $\tau$-module with finite dimensional weight spaces and assume the actions of $\mathbb{C}_q^{(1)}\otimes B$ and $\mathbb{C}_q^{(2)}\otimes B$  on $V$ are associative and anti-associative respectively. Now from the associative action of $\mathbb{C}_q^{(1)}\otimes B$ we will get an algebra homomorphism $\psi:B\rightarrow \mathbb{C}$. Then we get $ad\;t^sb=\psi(b)ad \;t^s$ and $D(u,0)b=\psi(b)D(u,0)$ on $V$. Then we prove our main result $V\cong F^{\alpha}(V,\psi)$.
\section{Notation and Preliminaries}\label{Section 2}
\begin{center}
\end{center}
In this paper all the vector spaces, algebras, tensor products are over the field of complex numbers $\mathbb{C}$. Let $\mathbb{Z},\mathbb{N}$ denote the sets of integers and natural numbers respectively. For any Lie algebra $L$, let $U(L)$ denote the universal enveloping algebra of $L$.
Let us fix a positive integer $n\geqslant2.$ Let $q=(q_{i,j})$ be a $n\times n$ matrix such that $q_{i,i}=1$ and $q_{i,j}=q_{j,i}^{-1}$ for $1\leq i,j\leq n$. Let $\mathbb{C}_q$ be the Laurent polynomial ring in $n$ non-commutative variables $t_1, \dots, t_n$ with the conditions $t_it_j=q_{i,j}t_jt_i$. Clearly we can see that $\mathbb{C}_q$ is $\mathbb{Z}^{n}$-graded with each graded component is one dimensional. For $a=(a_1,\dots, a_n) \in \mathbb{Z}^{n}$, let $t^a=t_1^{a_1}t_2^{a_2}\cdots t_n^{a_n} \in \mathbb{C}_q$.\\ Now let us define the following maps $\sigma, f :\mathbb{Z}^{n}\times \mathbb{Z}^{n} \rightarrow \mathbb{C}^*$ by
\begin{equation}
\sigma (a,b)= \prod _{1\leq i\leq j\leq n} q_{j,i}^{a_jb_i}
\end{equation}
\begin{equation}
f(a,b)=\sigma (a,b) \sigma (b,a)^{-1}
\end{equation} 
Then one has the following results for any $a,b,c \in \mathbb{Z}^{n}, k \in \mathbb{Z}$ :
\begin{enumerate}
\item $f(a,b)=f(b,a)^{-1}$,
\item $f(ka,a)=f(a,ka)=1$,
\item $f(a+b,c)=f(a,c)f(b,c)$,
\item $f(a,b+c)=f(a,b)f(a,c)$,
\item $\sigma (a,b+c)=\sigma (a,b)\sigma (a,c)$,
\item $t^at^b=\sigma(a,b)t^{a+b}$, $t^at^b=f(a,b)t^bt^a$.
\end{enumerate}
The radical of $f$ is defined by 
\begin{center}
$\text{rad} \;f=\{a \in \mathbb{Z}^{n}|f(a,b)=1, \forall b \in \mathbb{Z}^{n}\}$.
\end{center}
 It is easy to see that rad $f$ is a subgroup of $\mathbb{Z}^{n}$ and $m \in \text{rad}\; f$ iff $f(r,s)=1,\; \forall r,s \in \mathbb{Z}^{n}$ with $r+s=m$.
 \begin{proposition}
\begin{enumerate}
\item The center $Z(\mathbb{C}_q)$ of $\mathbb{C}_q$ has a basis consisting of monomials $t^a$, $a \in \text{rad}\; f $.
\item The Lie subalgebra $[\mathbb{C}_q,\mathbb{C}_q]$ of $\mathbb{C}_q$ has a basis consisting of monomial $t^a, a\in \mathbb{Z}^{n} \setminus \text{rad}\; f$.
\item $\mathbb{C}_q=[\mathbb{C}_q,\mathbb{C}_q]\oplus Z(\mathbb{C}_q)$.
\end{enumerate}
\end{proposition}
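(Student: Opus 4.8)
The plan is to exploit the $\mathbb{Z}^{n}$-grading of $\mathbb{C}_q$ throughout, reducing every statement about arbitrary elements to a statement about the monomial basis $\{t^a : a \in \mathbb{Z}^{n}\}$, and then to feed in the two multiplication rules $t^at^b=\sigma(a,b)t^{a+b}$ and $t^at^b=f(a,b)t^bt^a$ together with the characterization of $\text{rad}\; f$ recorded just above.

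For part (1), I would first note that a single monomial $t^a$ is central precisely when $f(a,b)=1$ for every $b$, since $t^at^b=f(a,b)t^bt^a$; by definition this says $a\in\text{rad}\; f$. For a general element $z=\sum_a c_a t^a$ I would compute $zt^b-t^bz=\sum_a c_a\bigl(\sigma(a,b)-\sigma(b,a)\bigr)t^{a+b}$ and then, comparing coefficients of each graded component $t^{a+b}$, conclude that $z$ is central iff $\sigma(a,b)=\sigma(b,a)$, i.e. $f(a,b)=1$, for every $a$ in the support of $z$ and every $b$. Hence the support of any central element lies in $\text{rad}\; f$, and conversely each such monomial is central; linear independence of distinct graded components then upgrades the spanning set to a basis.

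For part (2), the key computation is $[t^a,t^b]=\bigl(\sigma(a,b)-\sigma(b,a)\bigr)t^{a+b}$, a scalar multiple of $t^{a+b}$ that is nonzero exactly when $f(a,b)\neq 1$. By bilinearity $[\mathbb{C}_q,\mathbb{C}_q]$ is spanned by such commutators, so it sits inside the span of those $t^c$ for which $f(a,b)\neq 1$ for some decomposition $c=a+b$; by the recorded characterization of $\text{rad}\; f$ this forces $c\notin\text{rad}\; f$. The reverse inclusion is the step that genuinely uses that characterization in its nontrivial direction: given $c\notin\text{rad}\; f$ there exist $a,b$ with $a+b=c$ and $f(a,b)\neq 1$, whence $t^c$ is a nonzero scalar multiple of $[t^a,t^b]$ and therefore lies in $[\mathbb{C}_q,\mathbb{C}_q]$. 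I expect this direction to be the main (if modest) obstacle, since it is the only place where one must produce an explicit factorization of $c$ rather than merely compute a coefficient; everything else is routine grading bookkeeping.

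Finally, part (3) is immediate from the first two. The monomial basis $\{t^c : c\in\mathbb{Z}^{n}\}$ splits along the partition $\mathbb{Z}^{n}=\text{rad}\; f\ \sqcup\ (\mathbb{Z}^{n}\setminus\text{rad}\; f)$, and parts (1) and (2) identify the spans of the two blocks as $Z(\mathbb{C}_q)$ and $[\mathbb{C}_q,\mathbb{C}_q]$ respectively, yielding at once the direct sum decomposition $\mathbb{C}_q=[\mathbb{C}_q,\mathbb{C}_q]\oplus Z(\mathbb{C}_q)$ of vector spaces.
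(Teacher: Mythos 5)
Your argument is correct and complete: the grading reduction, the identity $t^at^b=f(a,b)t^bt^a$ for part (1), the characterization that $c\notin\text{rad}\,f$ if and only if $c$ admits a decomposition $c=a+b$ with $f(a,b)\neq 1$ for part (2), and the splitting of the monomial basis for part (3) are exactly the right ingredients. Note that the paper itself states this proposition without proof (it is quoted as a known fact about rational quantum tori, cf.\ reference \cite{[1]}), so there is no in-paper argument to compare against; yours is the standard one and fills that gap correctly.
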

Now $\mathbb{C}_q$ is a Lie algebra with the Lie brackets $[t^a,t^b]=(\sigma (a,b)-\sigma(b,a))t^{a+b}$.
Let $Der( \mathbb{C}_q)$ be the space of all derivations of $\mathbb{C}_q$. Then we have the lemma
\begin{lemma}(See \cite{[1]}, Lemma 2.48)
\begin{enumerate}
\item Der$(\mathbb{C}_q)=\oplus_{a\in \mathbb{Z}^{n}}(\text{Der}(\mathbb{C}_q))_a$.
\item \[ \text{Der}(\mathbb{C}_q)=\begin{cases} \mathbb{C} \text{ad}\; t^a & \text{if}\;a \notin \text{rad}\;(f)\\ \bigoplus_{i=1}^{n}\mathbb{C}t^a  \partial_i & \text{if}\; a \in \text{rad}\;(f).\end{cases}\]
\end{enumerate}
\end{lemma}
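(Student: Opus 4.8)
The plan is to reduce the whole statement to the classification of \emph{homogeneous} derivations and then to solve one small linear system on the generators $t_1,\dots,t_n$. Throughout write $e_i$ for the $i$-th standard basis vector of $\mathbb{Z}^n$, so that $t_i=t^{e_i}$. For part (1), I would show that the homogeneous components of any derivation are again derivations. Given $D\in\text{Der}(\mathbb{C}_q)$, decompose it degree by degree: for $a\in\mathbb{Z}^n$ let $D_a$ be the linear map sending $t^b$ to the degree-$(a+b)$ part of $D(t^b)$. To see each $D_a$ is a derivation, compare the components of degree $a+b+c$ on both sides of the Leibniz identity $D(t^bt^c)=D(t^b)t^c+t^bD(t^c)$; this yields $D_a(t^bt^c)=D_a(t^b)t^c+t^bD_a(t^c)$, so $D_a\in(\text{Der}(\mathbb{C}_q))_a$. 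A derivation is determined by its values on $t_1,\dots,t_n$, and each $D(t_i)$ is a finite Laurent polynomial, so only finitely many $D_a$ are nonzero and $D=\sum_a D_a$. Since distinct $a$ shift degree differently, the sum is direct, giving $\text{Der}(\mathbb{C}_q)=\bigoplus_a(\text{Der}(\mathbb{C}_q))_a$.

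For part (2), fix a homogeneous derivation $D$ of degree $a$. Since $D\big((\mathbb{C}_q)_{e_i}\big)\subseteq(\mathbb{C}_q)_{a+e_i}=\mathbb{C}\,t^{a+e_i}$, I can write $D(t_i)=c_it^{a+e_i}$ for scalars $c_1,\dots,c_n$, and $D$ is determined by this tuple. The key computation is to feed $D$ into the relation $t_it_j=q_{i,j}t_jt_i$, where $q_{i,j}=f(e_i,e_j)$. Expanding every product via $t^xt^y=\sigma(x,y)t^{x+y}$, using bimultiplicativity of $\sigma$ in each slot together with the identity $f(e_i,e_j)\sigma(e_j,e_i)=\sigma(e_i,e_j)$, one finds that every term carries the common factor $\sigma(e_i,e_j)t^{a+e_i+e_j}$, and after cancelling it the condition collapses to the clean system $c_id_j=c_jd_i$ for all $i,j$, where $d_k:=\sigma(a,e_k)-\sigma(e_k,a)$. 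In other words, an admissible degree-$a$ derivation is exactly a tuple $(c_i)$ proportional to $(d_i)$ when $(d_i)\neq0$, and an arbitrary tuple otherwise.

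It remains to identify the two regimes with the stated derivations, and this is where $\text{rad}\,f$ enters. Writing $d_k=\sigma(e_k,a)\big(f(a,e_k)-1\big)$, I observe that $d_k=0$ for all $k$ precisely when $f(a,e_k)=1$ for all $k$, i.e. when $a\in\text{rad}\,f$. If $a\notin\text{rad}\,f$, some $d_k\neq0$, so $c_id_j=c_jd_i$ forces $(c_i)=\mu(d_i)$; since $\text{ad}\,t^a(t_i)=(\sigma(a,e_i)-\sigma(e_i,a))t^{a+e_i}=d_it^{a+e_i}$, the solution space is realized as $\mathbb{C}\,\text{ad}\,t^a$. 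If $a\in\text{rad}\,f$, all $d_k=0$, the system is vacuous, and every tuple is admissible; the $n$ maps $t^a\partial_i$, which satisfy $t^a\partial_i(t_j)=\delta_{ij}\sigma(a,e_i)t^{a+e_i}$ and are genuine derivations because $t^a$ is then central, span this $n$-dimensional space, giving $\bigoplus_{i=1}^n\mathbb{C}\,t^a\partial_i$. I expect the main obstacle to be purely bookkeeping: carrying out the expansion of the derivation identity and correctly cancelling the $\sigma$-factors so that it simplifies to $c_id_j=c_jd_i$; once that identity is in hand, the case division by $\text{rad}\,f$ and the dimension count are immediate.
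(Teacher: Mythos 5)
The paper offers no proof of this lemma at all: it is quoted verbatim from Berman--Gao--Krylyuk (\cite{[1]}, Lemma 2.48), so there is nothing internal to compare against. Your blind argument is a correct, self-contained proof and follows the standard route of the cited source: decompose a derivation into $\mathbb{Z}^n$-homogeneous components (each again a derivation, finitely many nonzero since each $D(t_i)$ is a finite Laurent polynomial), reduce a degree-$a$ derivation to its tuple $(c_i)$ with $D(t_i)=c_it^{a+e_i}$, and extract the constraint from $t_it_j=q_{i,j}t_jt_i$. I checked the key computation: using $q_{i,j}=f(e_i,e_j)$ and $f(e_i,e_j)\sigma(e_j,e_i)=\sigma(e_i,e_j)$, the Leibniz identity does collapse to $c_id_j=c_jd_i$ with $d_k=\sigma(a,e_k)-\sigma(e_k,a)=\sigma(e_k,a)\bigl(f(a,e_k)-1\bigr)$, and bimultiplicativity of $f$ gives the clean dichotomy $(d_k)\equiv 0\iff a\in\text{rad}\,f$. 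A particularly good feature of your write-up is that you never need an abstract ``sufficiency'' argument for when a tuple extends to a derivation: in the non-radical case the constraint bounds the solution space by dimension one and $\text{ad}\,t^a$ (automatically a derivation, with tuple $(d_i)\neq 0$) realizes it, while in the radical case the explicit derivations $t^a\partial_i$ (derivations because $t^a$ is then central) realize every tuple. The only point worth making explicit is that a homogeneous derivation is determined by its values on $t_1,\dots,t_n$ because $D(t_i^{-1})$ is forced by $D(t_it_i^{-1})=0$; with that sentence added, the proof is complete.
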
 
The space Der($\mathbb{C}_q$) is a Lie algebra with the following brackets:
\begin{enumerate}
\item $[\text{ad}\;t^s.\text{ad}\;t^s]=(\sigma (s,r)-\sigma (r,s))\text{ad}\; t^{s+r},\: \forall r,s \notin \text{rad} \;f$;
\item $[D(u,r),\text{ad} \;t^s]=(u,s)\sigma (r,s)\text{ad}\; t^{r+s},\: \forall r\in \text{rad}\;(f), s \notin \text{rad} (f), u \in \mathbb{C}^{n}$;
\item $[D(u,r),D(u^{\prime},r^{\prime})]=D(w, r+r^{\prime}),\; \forall r,r^{\prime}\in \text{rad}\;(f), u,u^{\prime}\in \mathbb{C}^{n}$ and where $w=\sigma(r,r^\prime)((u,r^{\prime})-(u^{\prime},r)).$
\end{enumerate}
 
Now consider the Lie algebra $\mathfrak{g}=\mathbb{C}_q\rtimes \text{Der} \;\mathbb{C}_q$ with the Lie barckets as above and
\begin{enumerate}
\item $[D(u,r),t^s]=(u,s)\sigma(r,s)t^{r+s}$, for all $r \in \text{rad}\; f, s\in \mathbb{Z}^n, u \in \mathbb{C}^n$;
\item $[ad\; t^r, t^s]=(\sigma (r,s)-\sigma (s,r))t^{r+s}, \;\forall r\notin \text{rad} (f), s\in \mathbb{Z}^n$;
\end{enumerate}
Let $\mathfrak{h}=\{D(u,0)|u \in \mathbb{C}^n\}\oplus \mathbb{C}$. Then $\mathfrak{h}$ is a maximal abelian subalgebra of $\mathfrak{g}$. Let $W$ be the Lie subalgebra of Der $\mathbb{C}_q$ generated by elements $D(u,r)$
, where  $u \in \mathbb{C}^n$, $r \in \text{rad}\; (f)$. Let $\mathbb{C}_q^{(1)}=\mathbb{C}_q$ and $\mathbb{C}_q^{(2)}=\text{span}\{ad\; t^r-t^r|r \in \mathbb{Z}^n\}$ be the subalgebras of $\mathfrak{g}$. Then as in \cite{[2]}, we can say that $\mathfrak{g}=W\ltimes (\mathbb{C}_q^{(1)}+\mathbb{C}_q^{(2)})$ and $\mathbb{C}_q^{(2)}$ is isomorphic to $\mathbb{C}_q^{(1)}$ by the Lie algebra isomorphism $ad\; t^r-t^r\rightarrow t^r$.
Let $B$ be a commutative associative unital algebra over $\mathbb{C}$. Consider $\tau=(\mathbb{C}_q\rtimes \text{Der} \;\mathbb{C}_q) \otimes B$ and define a Lie algebra structure on $\tau$ by
\begin{center}0
$[x\otimes a,y\otimes b]=[x,y]\otimes(ab),$
\end{center}
where $x,y \in \mathbb{C}_q\rtimes \text{Der} \;\mathbb{C}_q, a,b \in B$.
\begin{theorem}({\cite{[3]}}, Theorem 2.6)\label{theorem 2.1}
Let $V^{\prime}$ be an irreducible $\mathbb{Z}^n$-graded $\mathfrak{g}-module$ with finite dimensional weight spaces with respect to $\mathfrak{h}$, with associative $\mathbb{C}_q^{(1)}$ and anti-associative $\mathbb{C}_q^{(2)}$ action and $t^0=1.$ Then $V^{\prime}\cong F^{\alpha}(V)$ for some $\alpha\in \mathbb{C}^n$ and a finite dimensional irreducible $\mathfrak{gl}_n$-module $V$.  
\end{theorem}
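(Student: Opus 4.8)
The plan is to use the associativity of the $\mathbb{C}_q^{(1)}$-action to trivialize $V'$ as a graded vector space, and then to extract a finite-dimensional irreducible $\mathfrak{gl}_n$-module from the residual action of the derivations. First I would note that, since $\mathbb{C}_q^{(1)}=\mathbb{C}_q$ acts associatively with $t^0=1$ acting as the identity, each $t^a$ is an invertible operator: $t^a\cdot(t^{-a}\cdot v)=(t^at^{-a})\cdot v=\sigma(a,-a)\,v$ and $\sigma(a,-a)\in\mathbb{C}^*$. As $t^a$ is homogeneous of degree $a$, it induces linear isomorphisms $V'_\beta\xrightarrow{\sim}V'_{a+\beta}$ between all graded components, so every weight space is nonzero of the same finite dimension as $V:=V'_0$. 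The map $v\otimes t^\beta\mapsto t^\beta\cdot v$ then identifies $V'$ with $V\otimes\mathbb{C}_q$ as a vector space, and under this identification $\mathbb{C}_q^{(1)}$ acts by left multiplication on the $\mathbb{C}_q$ factor, exactly as prescribed for $F^\alpha(V)$.

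Next I would determine $\alpha$ and set up the residual action. Because the $\mathbb{Z}^n$-grading coincides with the $\mathfrak{h}$-weight decomposition, $D(u,0)$ is scalar on each $V'_\beta$; calling its eigenvalue on $V'_0$ the value $(u,\alpha)$ defines $\alpha\in\mathbb{C}^n$, and the bracket $[D(u,0),t^\beta]=(u,\beta)t^\beta$ forces the eigenvalue $(u,\alpha+\beta)$ on $V'_\beta$, matching $F^\alpha(V)$. For the grading-shifting generators I would record that $\operatorname{ad}t^s=(\operatorname{ad}t^s-t^s)+t^s$ splits the non-radical derivations into their anti-associative $\mathbb{C}_q^{(2)}$-part and their associative $\mathbb{C}_q^{(1)}$-part, so both the operators $t^a$ and $\operatorname{ad}t^s-t^s$ act invertibly and compose multiplicatively (up to scalars). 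Writing the action of any homogeneous generator $X$ of degree $r$ as $X\cdot(v\otimes t^\beta)=\rho^X_\beta(v)\otimes t^{r+\beta}$ for coefficient operators $\rho^X_\beta\in\operatorname{End}(V)$, the commutators of $X$ with the $t^a$ and with $\mathfrak{h}$, together with the defining brackets of $\operatorname{Der}(\mathbb{C}_q)$, become recursions and quadratic relations for the families $\{\rho^X_\beta\}$.

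The crux, and the step I expect to be the main obstacle, is to show that these coefficient operators collapse to a genuine finite-dimensional irreducible $\mathfrak{gl}_n$-action on $V$. Here the rationality hypothesis is essential: since every $q_{ij}$ is a root of unity, $\operatorname{rad}f$ is a finite-index sublattice of $\mathbb{Z}^n$ and $\mathbb{C}_q$ is finite over its center, so the joint associative/anti-associative action of $\mathbb{C}_q^{(1)}$ and $\mathbb{C}_q^{(2)}$ has finite-dimensional fibers and supplies the ``matrix'' directions. Combined with the finite dimensionality of the weight spaces, a Larsson/exp-polynomial type argument shows that, after the grading shift is removed, the dependence of $\rho^X_\beta$ on $\beta$ is at most linear; the resulting constant and linear coefficients --- those attached to the $n$ degree directions $D(e_i,0)$ and to a generating set of the $\operatorname{ad}t^s$ --- are exactly the operators realizing $\mathfrak{gl}_n$ on $V$, with the $\mathfrak{gl}_n$ relations falling out of the quadratic relations above. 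Irreducibility and finite dimensionality of $V$ as a $\mathfrak{gl}_n$-module follow from irreducibility of $V'$ and from $\dim V'_0<\infty$. Finally I would check the transported action of each generator against the defining formulas of $F^\alpha(V)$ to conclude $V'\cong F^\alpha(V)$.
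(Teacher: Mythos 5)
The paper offers no proof of this statement at all: it is imported verbatim from \cite{[3]} (Theorem 2.6 there), so there is no internal argument to compare yours against. On its own terms, the first half of your sketch is sound and matches the standard setup: associativity of $\mathbb{C}_q^{(1)}$ together with $t^0=1$ makes each $t^a$ invertible (since $t^a\cdot t^{-a}\cdot v=\sigma(a,-a)v$), hence all weight spaces are identified with $V:=V'_0$ and $V'\cong V\otimes\mathbb{C}_q$ as a graded space, and $\alpha$ is correctly read off from the $D(u,0)$-eigenvalue on $V'_0$; this is exactly how the present paper uses the theorem in Section \ref{Section 3} (e.g.\ $t^kV_0=V_k$).

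The crux, however, is genuinely missing, and what you put in its place is not the right mechanism. First, the $\mathfrak{gl}_n$-action on $V$ does not sit in the coefficients of the $D(e_i,0)$ (these lie in $\mathfrak{h}$ and are scalars on each weight space) nor in those of the $\operatorname{ad}t^s$, $s\notin\operatorname{rad}f$ (which, by $\operatorname{ad}t^s=(\operatorname{ad}t^s-t^s)+t^s$ and the two (anti-)associativity hypotheses, act as scalar multiples of the shift, exactly as in the definition of $F^{\alpha}(V)$); it comes from the coefficient operators of $D(u,r)$ for $0\neq r\in\operatorname{rad}f$, i.e.\ the $\sum_{i,j}r_iu_jE_{ij}$ term. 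Second, the appeal to a ``Larsson/exp-polynomial'' argument to get linearity of $\rho^X_\beta$ in $\beta$ is both unjustified and not how this theorem is proved: the argument that actually works (in \cite{[3]}, and replayed in Section \ref{Section 3} of this paper in the loop setting) is to form the weight-zero operators $T(u,r)=\sigma(r,r)t^{-r}D(u,r)$, check that they close into a Lie algebra under which the finite-dimensional space $V'_0$ is irreducible, and then invoke the fact that the image of a Lie algebra in a finite-dimensional irreducible representation is reductive with at most one-dimensional center (Lemma \ref{lemma 3.1}) to force the extraneous abelian part to act by scalars; what survives of the $T(u,r)$ is precisely a finite-dimensional irreducible $\mathfrak{gl}_n$-module structure on $V'_0$, and the linear dependence on the grading parameter then falls out of $D(u,r)t^kv_0=T(u,r)t^{r+k}v_0+(u,k+\alpha)t^{r+k}v_0$ for free. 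Without this reduction (or an actual proof of your linearity claim and of the quadratic relations yielding $\mathfrak{gl}_n$), the proposal is a plausible outline but not a proof.
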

Let $\phi: B\rightarrow \mathbb{C}$ be an algebra homomorphism such that $\phi (1)=1$. Let $V$ be an irreducible finite dimensional $\mathfrak{gl}_n(\mathbb{C})$-module. Let us define a $\tau$-module structure on $F^{\alpha}(V)=V\otimes \mathbb{C}_q$ by
\begin{enumerate}
\item $ad\; t^r \otimes b.v(s)=\phi (b)(\sigma(r,s)-\sigma(s,r))v(r+s)$, where $r\notin \text{rad}\;(f), s \in \mathbb{Z}^n$ ;
\item $D(u,r)\otimes b. v(s)=\phi(b)\sigma (r,s)\{(u,s+\alpha)v+\sum_{i,j}r_iu_jE_{ij}\}\otimes v(r+s)$, where $r \in \text{rad}\; (f), s \in \mathbb{Z}^n, u\in \mathbb{C}^n$;
\item $t^r\otimes b.v(s)=\phi (b) \sigma(r,s)v(r+s)$, where $r,s \in \mathbb{Z}^n$.
\end{enumerate}
 
Now by Proposition 2.4 of \cite{[3]}, we can see that $F^{\alpha}(V)$ is an irreducible module for $\tau$. Let us denote this $\tau$-module by $F^{\alpha}(V,\phi).$
\vspace{5mm}

In this paper we want to prove under some natural conditions every irreducible modules for $\tau$ will be of the form $F^{\alpha}(V,\phi)$.
  
\section{}\label{Section 3}

Let $V$ be an irreducible $\tau$-module with finite dimensional weight spaces. Assume that the actions of $\mathbb{C}_q^{(1)}\otimes B$ and $\mathbb{C}_q^{(2)}\otimes B$ are associative and anti-associative respectively and $t^0$ acts as identity on $V$. Let $V=\oplus_{r \in \mathbb{Z}^n}V_r$ be its weight space decomposition with $V_r=\{v\in V |D(u,0)v=(u,r+\alpha)v, \forall u \in \mathbb{C}^n\}$ for some $\alpha \in \mathbb{C}^n$. Since $t^0$ acts as identity on $V$, each weight spaces will be of same dimension. Now $1\otimes B$ is in the center of $\tau$, therefore it will act scalarly on $V$. Since the action of $\mathbb{C}_q^{(1)}\otimes B$ is associative on $V$, there exists an algebra homomorphism $\psi:B \rightarrow \mathbb{C}$ such that $1\otimes b.v=\psi (b)v,\; \forall v \in V$. Let $M$ be the kernel of this homomorphism, therefore $M$ will be a maximal ideal of $B$. We will denote an element $x\otimes b$ of $\tau$ by $xb$.
\vspace*{.2cm}

Let $U(\tau)$ denote the universal enveloping algebra of $\tau$. Let $L(\tau)$ be a two sided ideal of $U(\tau)$ generated by $\{t^rb_1t^sb_2-\sigma (r,s)t^{r+s}b_1b_2 \;\forall r,s \in \mathbb{Z}^n, (ad\;(t^r)-(t^r))b_1(ad\;(t^s)-t^s)b_2+\sigma (s,r)(ad\;t^{r+s}-t^{r+s})b_1b_2, \forall r ,s \in \mathbb{Z}^n,t^0\otimes 1-1\}$. Therefore $L(\tau)$ acts trivially on $V$ and is a $U(\tau)/L(\tau)$-module.
\begin{proposition}\label{proposition 3.1}
For $r,s \in \mathbb{Z}^n\setminus \textit{rad}\;f$ and $b_1,b_2,b_3,b_4 \in B$, we have $[t^{-s}b_1 \;ad\; t^sb_2,t^{-r}b_3\;ad\; t^rb_4]=0$ on $V$.
\end{proposition}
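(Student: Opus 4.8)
The plan is to exploit the decomposition $\mathfrak g = W\ltimes(\mathbb C_q^{(1)}+\mathbb C_q^{(2)})$ by splitting each adjoint factor as $\mathrm{ad}\,t^s=(\mathrm{ad}\,t^s-t^s)+t^s$, where the first summand lies in $\mathbb C_q^{(2)}$ and the second in $\mathbb C_q^{(1)}$. Writing the bracket out, I would first expand
\[
t^{-s}b_1\,\mathrm{ad}\,t^s b_2=t^{-s}b_1\,(\mathrm{ad}\,t^s-t^s)b_2+t^{-s}b_1\,t^s b_2,
\]
and observe that by the associativity relation the last term equals $\sigma(-s,s)\,t^0 b_1b_2$, which acts as a scalar on $V$ since $t^0=1$ is central; the analogous statement holds for the $r$-factor. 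As scalars drop out of any commutator, the claim reduces to showing
\[
[\,t^{-s}b_1\,(\mathrm{ad}\,t^s-t^s)b_2,\ t^{-r}b_3\,(\mathrm{ad}\,t^r-t^r)b_4\,]=0.
\]
(One may also note both operators have degree $0$, so they preserve the finite-dimensional weight spaces, making the assertion an honest statement about commuting endomorphisms.)

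Next I would record the structural fact that $\mathbb C_q^{(1)}$ and $\mathbb C_q^{(2)}$ commute inside $\mathfrak g$. Using $[\mathrm{ad}\,t^a,t^c]=(\sigma(a,c)-\sigma(c,a))t^{a+c}$ together with $[t^a,t^c]=(\sigma(a,c)-\sigma(c,a))t^{a+c}$, a one-line computation gives $[t^a,\ \mathrm{ad}\,t^c-t^c]=0$, and tensoring with $B$ yields $[\mathbb C_q^{(1)}\otimes B,\ \mathbb C_q^{(2)}\otimes B]=0$ in $\tau$. Abbreviating $P=t^{-s}b_1$, $Q=(\mathrm{ad}\,t^s-t^s)b_2$, $P'=t^{-r}b_3$, $Q'=(\mathrm{ad}\,t^r-t^r)b_4$, all four cross-brackets vanish, so $P,P'$ commute with $Q,Q'$ on $V$. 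Hence $PQ\cdot P'Q'=(PP')(QQ')$ and $P'Q'\cdot PQ=(P'P)(Q'Q)$, giving
\[
[PQ,P'Q']=(PP')(QQ')-(P'P)(Q'Q).
\]

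I would then evaluate the two inner products separately. The associativity of $\mathbb C_q^{(1)}\otimes B$ and commutativity of $B$ give $PP'=\sigma(-s,-r)\,t^{-s-r}b_1b_3$ and $P'P=\sigma(-r,-s)\,t^{-s-r}b_1b_3$, while the anti-associativity of $\mathbb C_q^{(2)}\otimes B$ gives $QQ'=-\sigma(r,s)(\mathrm{ad}\,t^{s+r}-t^{s+r})b_2b_4$ and $Q'Q=-\sigma(s,r)(\mathrm{ad}\,t^{s+r}-t^{s+r})b_2b_4$. Thus both $(PP')(QQ')$ and $(P'P)(Q'Q)$ are scalar multiples of the single element $(t^{-s-r}b_1b_3)\bigl((\mathrm{ad}\,t^{s+r}-t^{s+r})b_2b_4\bigr)$, with coefficients $-\sigma(-s,-r)\sigma(r,s)$ and $-\sigma(-r,-s)\sigma(s,r)$ respectively.

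It remains to match these two coefficients, and this is the only genuine identity the argument needs: from $\sigma(a,b)=\prod_{1\le i\le j\le n}q_{j,i}^{a_jb_i}$ one reads off $\sigma(-s,-r)=\sigma(s,r)$ and $\sigma(-r,-s)=\sigma(r,s)$, so both coefficients equal $-\sigma(r,s)\sigma(s,r)$. Therefore $(PP')(QQ')=(P'P)(Q'Q)$, the commutator vanishes, and the proposition follows. I expect the main (though still routine) difficulty to be bookkeeping the $\sigma$-factors and keeping the $\mathbb C_q^{(1)}$- and $\mathbb C_q^{(2)}$-parts strictly separate; conceptually the whole statement rests on the cross-commutativity of the two copies of $\mathbb C_q$ combined with the sign reversal built into the anti-associative action, which is exactly what forces the two orderings to agree.
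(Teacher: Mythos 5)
Your argument is correct, but it follows a genuinely different route from the paper's proof. The paper never separates $\mathrm{ad}\,t^s$ into its $\mathbb{C}_q^{(1)}$- and $\mathbb{C}_q^{(2)}$-components. Instead it first combines associativity and anti-associativity into the single operator identity
\[
\mathrm{ad}\,t^sb_2\,\mathrm{ad}\,t^rb_4-\bigl(t^sb_2\,\mathrm{ad}\,t^rb_4+t^rb_4\,\mathrm{ad}\,t^sb_2\bigr)+\sigma(r,s)\,\mathrm{ad}\,t^{r+s}b_2b_4=0 \quad\text{on } V,
\]
then expands the commutator $[t^{-s}b_1\,\mathrm{ad}\,t^sb_2,\,t^{-r}b_3\,\mathrm{ad}\,t^rb_4]$ by the Leibniz rule, substitutes the structure constants of $\mathfrak{g}$ together with this identity, and checks that all the $\sigma$-weighted terms cancel --- a longer bookkeeping computation. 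You instead isolate the two structural facts that make the cancellation happen: the cross-commutativity $[t^a,\ \mathrm{ad}\,t^c-t^c]=0$, which lets you write each operator as a scalar plus a product of a $\mathbb{C}_q^{(1)}$-factor and a $\mathbb{C}_q^{(2)}$-factor and then reorder freely, and the symmetry $\sigma(-a,-b)=\sigma(a,b)$, which forces both orderings to yield the common coefficient $-\sigma(r,s)\sigma(s,r)$ in front of $t^{-r-s}b_1b_3\,(\mathrm{ad}\,t^{r+s}-t^{r+s})b_2b_4$. The individual steps all check out: $t^{-s}b_1t^sb_2$ acts as the scalar $\sigma(-s,s)\psi(b_1b_2)$, the two cross-brackets $[Q,P']$ and $[Q',P]$ that you actually use do vanish, and the four products $PP'$, $P'P$, $QQ'$, $Q'Q$ are read off correctly from the defining relations of $L(\tau)$. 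Your version is shorter and makes the mechanism of the cancellation transparent; the paper's version is a direct verification whose only by-product is the displayed identity, which is not reused later.
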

\begin{proof}
First using the associavity and anti-associavity of respective rational quantum tori we get
\begin{equation}
ad\; t^sb_2.ad\; t^rb_4-(t^sb_2.ad\;t^rb_4+t^rb_4.ad\;t^sb_2)+\sigma(r,s) ad\;t^{r+s}b_2b_4=0\: \textit{on}\:
V
\end{equation}

Let us consider \\
 
$[t^{-s}b_1\;ad\; t^sb_2,t^{-r}b_3\;ad\;t^rb_4]$\\
$=[t^{-s}b_1,t^{-r}b_3]\;ad\;t^s\; ad\;t^rb_4+t^{-s}b_1[ad\;t^sb_2,t^{-r}b_3]\;ad\;t^rb_4$\\ $+t^{-r}b_3\;[t^{-s}b_1,ad\;t^rb_4]\;ad\;t^sb_2+t^{-r}b_3\;t^{-s}b_1[ad\;t^sb_2,ad \;t^rb_4]$\\
$=(\sigma(s,r)-\sigma(r,s))t^{-(s+r)}b_1b_3.\{t^sb_2\;ad\;t^rb_4+t^rb_4\;ad\;t^sb_2-\sigma(r,s)\;ad\;t^{r+s}b_2b_4\}$\\$+t^{-s}b_1(\sigma(s,-r)-\sigma(-r,s))t^{s-r}b_2b_3\;ad\;t^rb_4-t^{-r}b_3(\sigma(r,-s)-\sigma(-s,r))t^{r-s}b_1b_4\;ad\;t^sb_2$\\$+\sigma(r,s)t^{-(r+s)}b_1b_3(\sigma(s,r)-\sigma(r,s))\;ad\;t^{r+s}b_2b_4$\\
$=(\sigma(s,r)-\sigma(r,s))\{\sigma(-(r+s),s)t^{-r}b_1b_2b_3\;ad\;t^rb_4+\sigma(-(r+s),r)t^{-s}b_1b_3b_4\;ad\;t^s-\sigma(r,s)t^{-(r+s)}b_1b_3\;ad\;t^{r+s}b_2b_4\}+(\sigma(s,-r)-\sigma(-r,s))\sigma(-s,s-r)t^{-r}b_1b_2b_3\;ad\;t^rb_4-(\sigma(r,-s)-\sigma(-s,r))\sigma(-r,r-s)t^{-s}b_1b_3b_4\;ad\;t^s+\sigma(r,s)(\sigma(s,r)-\sigma(r,s))t^{-(s+r)}b_1b_3\;ad\;t^{s+r}b_2b_4$\\
$=0.$
\end{proof}
\begin{lemma}\label{lemma 3.1}(\cite{[6]}) 
Let $\mathfrak{g}$ be any Lie algebra. Assume $(V^{\prime},\rho)$ be an irreducible finite dimensional module for $\mathfrak{g}$. Then $\rho (\mathfrak{g})$ is a reductive Lie algebra whose center is atmost one dimensional.
\end{lemma}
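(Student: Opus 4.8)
The plan is to forget the abstract Lie algebra $\mathfrak{g}$ and work entirely inside $\mathfrak{gl}(V')$ with the image $L := \rho(\mathfrak{g})$, which acts on $V'$ irreducibly and \emph{faithfully} (the $L$-action on $V'$ is tautological, so an element of $L$ that acts as $0$ is $0$). Over the algebraically closed field $\mathbb{C}$ the two assertions follow from two classical tools: Schur's lemma for the center, and Lie's theorem for reductivity.

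First I would bound the center. Since $V'$ is finite dimensional and irreducible over $\mathbb{C}$, Schur's lemma forces every operator commuting with all of $L$ to be a scalar. Each $z \in Z(L)$ commutes with $L$, hence acts as $\mu(z)\,\mathrm{Id}$; because $L \subseteq \mathfrak{gl}(V')$ consists of honest operators, the assignment $z \mapsto \mu(z)$ is injective on $Z(L)$, so $\dim Z(L) \le 1$.

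The substance is reductivity, which I would obtain by showing that the solvable radical $\mathfrak{r} = \mathrm{rad}(L)$ acts by scalars. Applying Lie's theorem to the solvable ideal $\mathfrak{r}$ yields a functional $\lambda : \mathfrak{r} \to \mathbb{C}$ and a nonzero weight space
\[
V'_\lambda = \{\, v \in V' : a v = \lambda(a) v \ \text{for all } a \in \mathfrak{r}\,\}.
\]
The invariance lemma from the proof of Lie's theorem then shows that, because $\mathfrak{r}$ is an ideal, $V'_\lambda$ is stable under all of $L$; irreducibility gives $V'_\lambda = V'$, so each $a \in \mathfrak{r}$ acts as $\lambda(a)\,\mathrm{Id}$. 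Thus $\mathfrak{r} \subseteq \mathbb{C}\,\mathrm{Id}$, which is central, while the (abelian, hence solvable) center always satisfies $Z(L) \subseteq \mathfrak{r}$. Combining gives $\mathfrak{r} = Z(L)$, i.e. the radical equals the center, which is exactly the statement that $L$ is reductive; and $\dim Z(L) = \dim \mathfrak{r} \le 1$ as before. Equivalently, one may simply quote the characterization that a Lie algebra admitting a faithful completely reducible finite-dimensional representation is reductive, applied to the faithful irreducible $V'$.

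The main obstacle is the invariance lemma: proving that $V'_\lambda$ is preserved by the whole algebra $L$, not merely by $\mathfrak{r}$. This is the step where characteristic $0$ is indispensable — one shows $\lambda([x,a]) = 0$ for $x \in L$ and $a \in \mathfrak{r}$ by a trace computation on the cyclic span $\mathrm{span}\{v, xv, x^2 v, \dots\}$ of a weight vector, on which $[x,a]$ has trace $0$ as a commutator yet also trace $(\dim U)\,\lambda([x,a])$, forcing $\lambda([x,a]) = 0$. Everything else — Schur's lemma, faithfulness of $V'$, and the purely formal passage from ``$\mathfrak{r}$ acts by scalars'' to ``$L$ is reductive'' — is routine.
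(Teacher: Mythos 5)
Your proof is correct: Schur's lemma bounds the center, and Lie's theorem together with the invariance lemma shows the radical acts by scalars, hence equals the center, which is exactly reductivity. The paper gives no proof of this lemma at all --- it is stated with only a citation to Humphreys --- and your argument is precisely the standard textbook proof being invoked there, so there is nothing to compare beyond noting that you have supplied the details the paper omits.
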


Let $U_1=U(\tau)/L(\tau)$ and let us define $T(u,r,b_1,b_2)=\sigma(r,r)t^{-r}b_1D(u,r)b_2$ as an element of $U_1$ for $r \in \textit{rad}\;f,u \in \mathbb{C}^n,b_1,b_2 \in B$. Let $T=\textit{span}\{T(u,r,b_1,b_2):r \in \textit{rad}\;f,u \in \mathbb{C}^n,b_1,b_2 \in B\}$. Let $\mathfrak{g}$ be the Lie subalgebra generated by $T(u,r,b_1,b_2)$ and $t^{-s}b_3\;ad\;t^sb_4$ for all $u\in \mathbb{C}^n; r \in \textit{rad}\;(f); s\notin \textit{rad}\;(f);b_1,b_2,b_3,b_4 \in B$. Let $I$ be the Lie subalgebra generated by the elements of the form $t^{-s}b_1\;ad\; t^sb_2$ . Then we have
\begin{lemma}\label{lemma 3.2}
$I$ be an abelian ideal of $\mathfrak{g}$.  
\end{lemma}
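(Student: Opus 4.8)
The plan is to prove two separate claims: first that $I$ is abelian, and second that $I$ is an ideal of $\mathfrak{g}$. The abelian claim is essentially already in hand. By definition $I$ is generated as a Lie subalgebra by the elements $t^{-s}b_1\,ad\,t^sb_2$ for $s \notin \text{rad}\,f$ and $b_1,b_2 \in B$, and Proposition \ref{proposition 3.1} states precisely that any two such generators commute on $V$. Since $I$ is generated by mutually commuting elements, and bracketing is bilinear, every bracket of two elements of $I$ vanishes; hence $I$ is abelian and in fact coincides with its own spanning set of generators (there are no higher brackets to form).

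To show $I$ is an ideal, I must verify that $[\mathfrak{g}, I] \subseteq I$. Since $\mathfrak{g}$ is generated by the two families $T(u,r,b_1,b_2)$ and the $t^{-s}b_3\,ad\,t^sb_4$, it suffices to check that the bracket of each type of generator of $\mathfrak{g}$ with each generator of $I$ lands back in $I$. The bracket of an $I$-generator with an $I$-generator is zero by the abelian property, so the only real computation is $[T(u,r,b_1,b_2),\,t^{-s}b_3\,ad\,t^sb_4]$ for $r \in \text{rad}\,f$, $s \notin \text{rad}\,f$. Here I would expand $T(u,r,b_1,b_2) = \sigma(r,r)t^{-r}b_1 D(u,r)b_2$ and use the Leibniz rule for the bracket together with the structure constants recorded in Section \ref{Section 2}: namely $[D(u,r),t^{-s}] = (u,-s)\sigma(r,-s)t^{r-s}$, $[D(u,r),ad\,t^s]=(u,s)\sigma(r,s)ad\,t^{r+s}$, and the commutation of $t$-monomials via $\sigma$. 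Expanding $[t^{-r}b_1 D(u,r)b_2,\, t^{-s}b_3\,ad\,t^sb_4]$ by the derivation property over the four tensor-factor slots produces a sum of terms each of which, after using $t^at^b=\sigma(a,b)t^{a+b}$ to collect the monomials, has the shape $(\text{scalar})\cdot t^{-s'}b'\,ad\,t^{s'}b''$ with $s' \notin \text{rad}\,f$ — precisely an element of $I$.

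The main obstacle I anticipate is the bookkeeping in this single bracket computation: one must carefully track that after $D(u,r)$ acts (producing either a $t^{r-s}$ from hitting $t^{-s}$ or an $ad\,t^{r+s}$ from hitting $ad\,t^s$) and after the leading $t^{-r}$ is merged with the resulting monomials via the cocycle $\sigma$, the net exponent on the base monomial and on the $ad$-factor are genuinely negatives of each other, so the result is of the form $t^{-s'}(\cdots)ad\,t^{s'}(\cdots)$ with matching exponents. This is the content that guarantees membership in $I$ rather than merely in $\mathfrak{g}$. I would organize the calculation by writing the bracket as a sum of four Leibniz terms, simplifying each using the relations (2) and (3) of the bracket list for $\text{Der}(\mathbb{C}_q)$ and the $\mathfrak{g}$-relations, and then verifying the exponent-matching term by term. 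The anti-associativity relation for $\mathbb{C}_q^{(2)}$ used in the proof of Proposition \ref{proposition 3.1} may again be needed to rewrite products $ad\,t^a \cdot ad\,t^b$ and reduce cross terms, but structurally the computation closes within $I$.
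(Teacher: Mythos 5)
Your proposal follows the paper's proof exactly: abelianness is quoted from Proposition \ref{proposition 3.1}, and the ideal property is reduced to the single bracket $[t^{-r}b_1D(u,r)b_2,\,t^{-s}b_3\,ad\,t^sb_4]$, which the paper expands by the Leibniz rule into two surviving terms, a multiple of $t^{-(r+s)}b_1b_3\,ad\,t^{r+s}b_2b_4$ and a multiple of $t^{-s}b_1b_2b_3\,ad\,t^sb_4$, both visibly generators of $I$ (note $r+s\notin\mathrm{rad}\,f$). The only differences are that you outline rather than execute this routine computation, and that the anti-associativity of $\mathbb{C}_q^{(2)}$ you hedge about is not actually needed here, since the Leibniz terms involving $[t^{-r},t^{-s}]$ and $[t^{-r},ad\,t^s]$ vanish outright because $r\in\mathrm{rad}\,f$.
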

\begin{proof}
By Proposition \ref{proposition 3.1}, we have $I$ is an abelian subalgebra.\\

Now consider \\
$[t^{-r}b_1D(u,r)b_2,t^{-s}b_3\;ad\;t^sb_4]$\\
$=[t^{-r}b_1D(u,r)b_2,t^{-s}b_3]\;ad\;t^sb_4+t^{-s}b_3[t^{-r}b_1D(u,r)b_2,ad\;t^sb_4 ]$\\
$=t^{-r}b_1[D(u,r)b_2,t^{-s}b_3]\;ad\;t^sb_4+ t^{-s}b_3t^{-r}b_1[D(u,r)b_2,ad\;t^sb_4]\:$ (The other two terms will vanish since $r\in \textit{rad}\;f)$\\
$=-(u,s)\sigma(r,-s)t^{-r}b_1t^{r-s}b_2b_3\;ad\;t^sb_4+(u,s)\sigma(r,s)^2t^{-(r+s)}b_1b_3\;ad\;t^{r+s}b_2b_4$\\
$=(u,s)\{\sigma(r,s)^2t^{-(r+s)}b_1b_3\;ad\;t^{r+s}b_2b_4+\sigma(-r,r)t^{-s}b_1b_2b_3\;ad\;t^sb_4\}\in I$\\
 Hence we have the Lemma.
\end{proof}
\begin{proposition}\label{proposition 3.2}
\begin{enumerate}
\item $T $ is a Lie algebra with the Lie brackets\\
$[T(u,r,b_1,b_2),T(v,s,b_3,b_4)]=(v,r)T(u,r,b_1,b_2b_3b_4)-(u,s) T(v,s,b_3,b_1b_2b_4)+ T(w,r+s,b_1b_3,b_2b_4),$ where $w= ((u,s)v-(v,r)u)$.
\item $[D(v,0),T(u,r,b_1,b_2)]=0$  
\item Let $V=\oplus _{r\in \mathbb{Z}^n}V_r$ be its weight space decomposition. Then each $V_r$ is $T$ invariant.
\item Each $V_r$ is $T$-irreducible.
\end{enumerate}
\end{proposition}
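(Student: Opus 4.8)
For parts (1) and (2) I would compute directly in $U_1=U(\tau)/L(\tau)$. Writing each generator as a product of two elements of $\tau$, namely $T(u,r,b_1,b_2)=\sigma(r,r)\,(t^{-r}b_1)(D(u,r)b_2)$, I would expand the commutator $[T(u,r,b_1,b_2),T(v,s,b_3,b_4)]$ by the Leibniz rule into a sum of terms, each containing exactly one bracket of generators of $\tau$, and then substitute the bracket relations recorded in Section~\ref{Section 2} --- the $[D(u,r),D(v,s)]$ rule, the $[D(u,r),t^{s}]$ rule, and $[t^{a},t^{b}]$ --- together with the defining associativity relation $t^{a}b\cdot t^{a'}b'=\sigma(a,a')\,t^{a+a'}bb'$ valid in $U_1$, which collapses every surviving product of two pure-torus factors into a single monomial. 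Collecting the monomials while tracking the $\sigma$-cocycle factors and the products in $B$ should yield exactly the three claimed terms, with $w=(u,s)v-(v,r)u$ emerging from the $[D,D]$ contribution; this is the same bookkeeping already done in Proposition~\ref{proposition 3.1} and Lemma~\ref{lemma 3.2}. Part (2) is the same but shorter: bracketing $D(v,0)$ against the two factors of $T(u,r,b_1,b_2)$ gives $[D(v,0),t^{-r}]=-(v,r)t^{-r}$ and $[D(v,0),D(u,r)]=(v,r)D(u,r)$ (using $\sigma(0,\cdot)=1$), and the two contributions cancel. Part (3) is then immediate: for $v\in V_r$ and a generator $T$ of $T$, part (2) gives $D(u,0)(Tv)=T\,D(u,0)v=(u,r+\alpha)Tv$, so $Tv\in V_r$.

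The heart of the matter is part (4). I would proceed in three stages. \emph{First}, I record the formal fact that, since $V$ is an irreducible $\tau$-module graded by the weight lattice $\mathbb{Z}^n$, each nonzero $V_r$ is irreducible over the degree-zero subalgebra $\mathcal{A}_0$ of the image of $U(\tau)$ in $\mathrm{End}(V)$: if $0\neq W\subseteq V_r$ is $\mathcal{A}_0$-stable then $\mathcal{A}W$ is a graded submodule, hence all of $V$, and its weight-$r$ component gives $V_r=\mathcal{A}_0W\subseteq W$. \emph{Second}, which is the main obstacle, I would show that on $V_r$ the algebra $\mathcal{A}_0$ is generated by the operators in $\mathfrak{g}$ --- the $T$-atoms $t^{-a}b\,D(u,a)b'$ with $a\in\text{rad}\,f$ and the $I$-atoms $t^{-a}b\,\text{ad}\,t^{a}b'$ with $a\notin\text{rad}\,f$ --- together with scalars. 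Concretely, an arbitrary degree-zero monomial in the generators of $\tau$ is reduced by induction on the number of derivation-type factors: the associativity relation merges adjacent pure-torus factors, the relations in $L(\tau)$ collapse an oppositely-weighted pair of torus factors to a scalar, and commuting a single derivation factor past the torus factors (via the $[D,t]$ and $[\text{ad}\,t,t]$ brackets, which again produce torus factors) pairs it with a compensating $t^{-a}$ into a $T$- or $I$-atom, the commutator corrections carrying strictly fewer derivation factors. The operators $D(u,0)b$ and $t^{0}b$ that occur are scalars on $V_r$ by the reductions $D(u,0)b=\psi(b)D(u,0)$ and $1\otimes b=\psi(b)$ established at the start of Section~\ref{Section 3}. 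This identification transfers the $\mathcal{A}_0$-irreducibility of the first stage to irreducibility of $V_r$ over $\mathfrak{g}=T+I$, since adjoining scalars does not change the lattice of submodules.

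\emph{Third}, I finish with the reductivity argument for which Lemmas~\ref{lemma 3.1} and~\ref{lemma 3.2} are designed. As $V_r$ is finite-dimensional and now irreducible over $\mathfrak{g}$, Lemma~\ref{lemma 3.1} makes the image of $\mathfrak{g}$ in $\mathfrak{gl}(V_r)$ reductive with at most one-dimensional center. By Lemma~\ref{lemma 3.2}, $I$ is an abelian ideal of $\mathfrak{g}$, so its image is an abelian ideal of a reductive Lie algebra and hence central, and by Schur's lemma this center acts by scalars on the irreducible $V_r$. Thus $I$ acts by scalars; and since $T$ is a subalgebra by part (1) and $I$ an abelian ideal by Lemma~\ref{lemma 3.2}, their sum is closed, so $\mathfrak{g}=T+I$ as a vector space and every $T$-stable subspace of $V_r$ is automatically $\mathfrak{g}$-stable. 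Therefore the $\mathfrak{g}$-irreducibility of $V_r$ forces $T$-irreducibility, establishing part (4). I expect the genuine difficulty to lie entirely in the second stage --- the normal-ordering reduction of an arbitrary degree-zero monomial to $T$- and $I$-atoms and scalars --- where the cocycle bookkeeping and the induction on derivation-type factors must be handled with care.
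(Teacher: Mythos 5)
Your proposal is correct and follows essentially the same route as the paper: direct bracket computations in $U(\tau)/L(\tau)$ for (1)--(3), and for (4) the weight argument reducing to the degree-zero subalgebra, the expression of $U_0$ via the $T$- and $I$-type generators, and then Lemma~\ref{lemma 3.1} together with Lemma~\ref{lemma 3.2} to make $I$ act by scalars so that $\mathfrak{g}$-irreducibility of $V_r$ descends to $T$-irreducibility. The only difference is that you spell out the normal-ordering reduction of degree-zero monomials, which the paper merely asserts.
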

\begin{proof}
\begin{enumerate}
\item $[T(u,r,b_1,b_2),T(v,s,b_3,b_4)]$\\
 $=\sigma (r,r)\sigma (s,s)[t^{-r}b_1D(u,r)b_2,\;t^{-s}b_3D(v,s)b_4]$ \\ 
$=\sigma (r,r)\sigma (s,s)[t^{-r}b_1D(u,r)b_2,\;t^{-s}b_3]D(v,s)b_4+\sigma (r,r)\sigma (s,s)t^{-s}b_3[t^{-r}b_1D(u,r)b_2,\;D(v,s)b_4]$\\
$=\sigma (r,r)\sigma (s,s)t^{-r}b_1[D(u,r)b_2,\;t^{-s}b_3]D(v,s)b_4+\sigma (r,r)\sigma (s,s)[t^{-r}b_1,t^{-s}b_3]D(u,r)b_2D(v,s)b_4$\\
$+\sigma (r,r)\sigma (s,s)t^{-s}b_3[t^{-r}b_1,\;D(v,s)b_4]D(u,r)b_2+\sigma (r,r)\sigma (s,s)t^{-s}b_3t^{-r}b_1[D(u,r)b_2\;D(v,s)b_4]$\\
$=-(u,s)\sigma(s,s)t^{-s}b_1b_2b_3D(v,s)b_4+\sigma(r,r)(v,r)t^{-r}b_1b_3b_4D(u,r)b_2$ \\
$+\sigma (r,r)\sigma (s,s)\sigma(s,r)\sigma (r,s)t^{-(r+s)}b_1b_3D(w,r+s)b_2b_4$\\
$=\sigma(r,r)(v,r)t^{-r}b_1D(u,r)b_2b_3b_4-\sigma(s,s)(u,s)t^{-s}b_3D(v,s)b_1b_2b_4$\\
$+\sigma(r+s,r+s)t^{-(r+s)}b_1b_3D(w,r+s)b_2b_4$\\
$=(v,r)T(u,r,b_1,b_2b_3b_4)-(u,s)T(v,s,b_3,b_1b_2b_4)+T(w,r+s,b_1b_3,b_2b_4)$.
\item  
$[D(v,0),t^{-r}b_1D(u,r)b_2]=t^{-r}b_1[D(v,0),D(u,r)b_2]+[D(v,0),t^{-r}b_1]D(u,r)b_2$\\
$=(v,r)t^{-r}b_1D(u,r)b_2-(v,r)t^{-r}b_1D(u,r)b_2=0.$
\item From (2), it follows.
\item Let $U(\tau)=\oplus _{r \in \mathbb{Z}^n}U_r$, where $U_r=\{v\in U(\tau):[D(u,0),v]=(u,r)v,\;\forall u \in \mathbb{C}^n\}$. As $V$ is an irreducible $\tau$-module, using weight argument we get $V_r$ is an irreducible $U_0$-module. Now every element of $U_0$ can be written as a linear combination of the elements $t^{-r_1}b_1D(u_1,r_1)b_2\cdots t^{-r_k}b_{2k-1}D(u_k,r_{k})b_{2k}t^{-s_1}c_1\;ad\;t^{s_1}c_2\cdots t^{-r_k}c_{2k-1}\;ad\;t^{s_k}c_{2k}$, where $r_i\in \textit{rad}\;(f),s_i\notin  \textit{rad}\;(f),b_j,c_j \in B $ with $1\leq i\leq k,\;1\leq j\leq 2k$. So $U_0$ is generated by $T(u,r,b_1,b_2)$ and $t^{-s}b_3\;ad\;t^s b_4$. Hence $V_r$ is an irreducible module for $\mathfrak{g}$. Now $I$ being abelian subalgebra of $\mathfrak{g}$, using Lemma \ref{lemma 3.1}, we can say that $t^{-s}b_1ad\;t^sb_2$ acts as scalar on $V_r$. Therefore $V_r$ will be an irreducible module for $T$.
\end{enumerate}
\end{proof}
Let us define $T^{\prime}(u,r,b_1,b_2)=T(u,r,b_1,b_2)-D(u,0)b_1b_2$ and $T^{\prime}=\textit{span}\{T^{\prime}(u,r,b_1,b_2):r \in \textit{rad}\;f,u \in \mathbb{C}^n,b_1,b_2 \in B\}$. Then we can see that $T^{\prime}$ is a Lie algebra with the Lie brackets \\
$[T^{\prime}(u,r,b_1,b_2),T^{\prime}(v,s,b_3,b_4)]=(v,r)T^{\prime}(u,r,b_1,b_2b_3b_4)-(u,s) T^{\prime}(v,s,b_3,b_1b_2b_4)+ T^{\prime}(w,r+s,b_1b_3,b_2b_4),$ where $w= ((u,s)v-(v,r)u)$. From Proposition(\ref{proposition 3.2})(1) we can see that $[T,T]\subset T^{\prime}$, therefore using Lemma (\ref{lemma 3.1}) and Proposition(\ref{proposition 3.2})(4) one can check that $V_r$ is $T^{\prime}$-irreducible.
\begin{lemma}\label{lemma 3.3}
$V_r\cong V_s$ as $T^{\prime}$-module for all $r,s\in \mathbb{Z}^n$.
\end{lemma}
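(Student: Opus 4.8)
The goal is to establish an isomorphism $V_r \cong V_s$ of $T'$-modules for arbitrary $r,s \in \mathbb{Z}^n$. The natural strategy is to produce an explicit intertwining map and then invoke irreducibility. The plan is to use the shift operators coming from the subalgebra $\mathbb{C}_q^{(2)}\otimes B$, namely the elements of the form $t^{-a}b\,ad\,t^a b'$ (equivalently the images of $ad\,t^a - t^a$ tensored with $B$), which move weight $r$ to weight $r+a$ and, by Proposition \ref{proposition 3.1} and Lemma \ref{lemma 3.2}, generate the abelian ideal $I$ that acts by scalars on each weight space. First I would fix a vector $w \in \mathbb{Z}^n$ with $s = r + w$ and consider the operator given by $ad\,t^w \otimes b$ (for a suitable $b \in B$ with $\psi(b) \ne 0$) as a candidate map $V_r \to V_s$; since these operators carry $V_r$ into $V_{r+w}$, the composite with the reverse shift $ad\,t^{-w}$ lands back in $V_r$ and, by the scalar action of $I$, is a nonzero scalar, so the shift is injective and hence an isomorphism of vector spaces.

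The substance is to check that this shift is $T'$-\emph{equivariant}. For this I would compute the commutator $[T'(u,a,b_1,b_2),\, ad\,t^w b]$ acting on $V_r$ and show it reduces, modulo the relations defining $U_1 = U(\tau)/L(\tau)$, to a multiple of $ad\,t^w b$ times the action of $T'(u,a,b_1,b_2)$; the key point is that the $D(u,a)$-part of $T'$ commutes with the grading shift up to the weight-dependent scalar $(u,\cdot)$, and the correction term $-D(u,0)b_1b_2$ built into the definition of $T'$ is precisely what cancels the weight-dependence so that the map intertwines the $T'$-action on $V_r$ with that on $V_s$. Here I would lean on Proposition \ref{proposition 3.2}(2), which says $D(v,0)$ commutes with $T(u,r,b_1,b_2)$, together with the bracket relations (2) and (3) for $Der(\mathbb{C}_q)$ recorded in Section \ref{Section 2}, to control how $D(u,a)$ passes through $ad\,t^w$.

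The main obstacle I anticipate is verifying that the definition of $T'$, with its subtraction of $D(u,0)b_1b_2$, is exactly calibrated so that the scalar by which $T(u,a,\cdot,\cdot)$ differs on $V_r$ versus $V_s$ is absorbed. Concretely, on $V_r$ the term $D(u,0)$ contributes the scalar $(u,r+\alpha)$, while on $V_s = V_{r+w}$ it contributes $(u,r+w+\alpha)$; the shift operator must either commute with $T'$ on the nose or the discrepancy $(u,w)$ must vanish against the bracket contribution coming from $[D(u,a), ad\,t^w]$. Tracking these scalars carefully through the associativity relations of $L(\tau)$ is where the real bookkeeping lies. Once the intertwining identity is confirmed, the conclusion is immediate: a nonzero $T'$-equivariant linear bijection between two irreducible $T'$-modules (irreducibility of each $V_r$ being guaranteed by the remark following Proposition \ref{proposition 3.2}) is an isomorphism, giving $V_r \cong V_s$ for all $r,s$.
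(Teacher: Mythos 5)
Your overall template (exhibit an explicit intertwiner between weight spaces, check it commutes with $T'$, conclude by irreducibility) matches the paper's, but your choice of intertwining operator is wrong, and this is a genuine gap rather than a bookkeeping issue. The paper's proof uses multiplication by $t^{s-r}$ from $\mathbb{C}_q^{(1)}$: one checks $[T'(u,m,b_1,b_2),t^{s-r}]=0$ on $V$ (the $\sigma$-factors cancel by bimultiplicativity, and the $-D(u,0)b_1b_2$ correction kills the residual $(u,s-r)$ term, exactly the calibration you anticipated), and $t^{s-r}$ is automatically a linear bijection $V_r\to V_s$ because $\mathbb{C}_q^{(1)}\otimes B$ acts associatively with $t^0=1$, so $t^{r-s}t^{s-r}=\sigma(r-s,s-r)\neq 0$.

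Your proposed intertwiner $ad\,t^{w}\otimes b$ with $w=s-r$ fails on two counts. First, when $s-r\in\mathrm{rad}\,f$ the element $ad\,t^{w}$ is not available at all ($t^w$ is then central, so $ad\,t^w$ is the zero derivation and does not appear in the basis of $Der(\mathbb{C}_q)$), so your map cannot connect such pairs of weight spaces, and these are precisely the pairs one most needs (the lemma must hold for all $r,s\in\mathbb{Z}^n$). Second, even for $w\notin\mathrm{rad}\,f$ your claim that the composite $ad\,t^{-w}\,ad\,t^{w}$ acts by a \emph{nonzero} scalar on $V_r$ is unjustified: Lemma \ref{lemma 3.1} only gives that elements of the abelian ideal act by scalars, and in the target modules $F^{\alpha}(V,\phi)$ the operator $ad\,t^{w}$ acts on $v(r)$ by the factor $\sigma(w,r)-\sigma(r,w)$, which vanishes whenever $f(w,r)=1$. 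So the proposed shift can be the zero map and injectivity breaks down. The fix is exactly the paper's: use the invertible multiplication operators $t^{s-r}$ from the associatively-acting copy $\mathbb{C}_q^{(1)}$ rather than the adjoint operators from $\mathbb{C}_q^{(2)}$.
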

\begin{proof}
We can see that $[T^{\prime}(u,m,b_1,b_2),t^{(r-s)}]=0$. Now the same map as Proposition (3.4)(5) of \cite{[3]} will give the isomorphism.
\end{proof}
Let $I(u,r,b_1,b_2)=\psi (b_1)\sqrt{\sigma(r,r)}D(u,r)b_2-D(u,0)b_1b_2$ for $u\in \mathbb{C}^n,\; r\in \text{rad} f, \;b_1,b_2\in B$. Consider the space $I=\text{span}\;\{I(u,r,b_1,b_2):u\in \mathbb{C}^n,\; r\in \text{rad} f, \;b_1,b_2\in B\}$. Then $I$ will be a Lie subalgebra of $\tau$ with the Lie brackets
\begin{center}
$[I(u,r,b_1,b_2),I(v,s,b_3,b_4)]=(u,s)I(v,r+s,b_1b_3,b_2b_4)-(v,r)I(u,r+s,b_1b_3,b_2b_4)-(u,s)I(v,s,b_3,b_1b_2b_4)+(v,r)I(u,r,b_1,b_2b_3b_4)$.
\end{center}
Let $\eta\;:I\rightarrow T^{\prime}$ be the map defined as $\eta (I(u,r,b_1,b_2))=T^{\prime}(u,r,b_1,b_2)$.
\begin{lemma}\label{lemma 3.4}
$\eta$ is a Lie algebra isomorphism.
\end{lemma}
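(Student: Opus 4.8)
The plan is to show that the explicitly given map $\eta$ is a Lie algebra isomorphism by establishing three things: that $\eta$ is a well-defined linear bijection, that it is a homomorphism (respects brackets), and that these together force it to be an isomorphism. The candidate map is $\eta(I(u,r,b_1,b_2)) = T'(u,r,b_1,b_2)$, so the core of the work is purely formal: I need to compare the bracket formula defining the Lie structure on $I$ with the bracket formula defining the Lie structure on $T'$. Both formulas are already recorded in the excerpt, so the homomorphism property should reduce to matching terms symbol-by-symbol.

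\emph{First} I would verify that $\eta$ respects brackets. I would take the bracket in $I$,
\[
[I(u,r,b_1,b_2),I(v,s,b_3,b_4)] = (u,s)I(v,r+s,b_1b_3,b_2b_4) - (v,r)I(u,r+s,b_1b_3,b_2b_4) - (u,s)I(v,s,b_3,b_1b_2b_4) + (v,r)I(u,r,b_1,b_2b_3b_4),
\]
apply $\eta$ termwise, and compare with $[T'(u,r,b_1,b_2),T'(v,s,b_3,b_4)]$. The $T'$-bracket is written in the excerpt in the compact form $(v,r)T'(u,r,b_1,b_2b_3b_4)-(u,s)T'(v,s,b_3,b_1b_2b_4)+T'(w,r+s,b_1b_3,b_2b_4)$ with $w=(u,s)v-(v,r)u$. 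The point to check carefully is that the mixed term $T'(w,r+s,b_1b_3,b_2b_4)$ expands, using linearity of $T'$ in its first slot, as $(u,s)T'(v,r+s,b_1b_3,b_2b_4)-(v,r)T'(u,r+s,b_1b_3,b_2b_4)$; once this expansion is performed the two bracket expressions coincide term for term. So the homomorphism property is really the observation that the bracket on $I$ was defined to mirror the bracket on $T'$ after expanding the vector $w$.

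\emph{Next} I would argue bijectivity. Surjectivity is immediate because $\eta$ sends the spanning set $\{I(u,r,b_1,b_2)\}$ onto the spanning set $\{T'(u,r,b_1,b_2)\}$. For injectivity I would note that $\eta$ is the identity on the indexing data $(u,r,b_1,b_2)$, so it suffices to check that the defining relations among the $I(u,r,b_1,b_2)$ (their linear dependencies, e.g. bilinearity in $u$ and in the $b_i$) are exactly the same as those among the $T'(u,r,b_1,b_2)$. This is the subtle point: I must confirm that $\eta$ is well-defined as a \emph{linear} map, i.e. that whenever a linear combination of the generators $I(u,r,b_1,b_2)$ vanishes, the corresponding combination of $T'(u,r,b_1,b_2)$ also vanishes, and conversely. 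Since both families are built from the same data with the same multilinearity conventions, this should follow by matching generators and relations.

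\emph{The main obstacle} I anticipate is the injectivity/well-definedness check rather than the bracket computation. The elements $I(u,r,b_1,b_2)$ and $T'(u,r,b_1,b_2)$ are defined by somewhat different-looking formulas --- $I$ uses $\psi(b_1)\sqrt{\sigma(r,r)}D(u,r)b_2 - D(u,0)b_1b_2$ inside $\tau$, while $T'$ uses $\sigma(r,r)t^{-r}b_1D(u,r)b_2 - D(u,0)b_1b_2$ inside $U_1$ --- so I must be sure that the two spaces share the same set of linear relations before concluding that a bijection on generators descends to a linear (hence Lie algebra) isomorphism. Concretely, I would use the associativity relation $t^{-r}b_1 t^r b_2 = \sigma(-r,r)\,t^0 b_1 b_2 = \psi(b_1 b_2)\,\sigma(-r,r)$ on $V$ (together with $t^0$ acting as identity and $1\otimes b$ acting by $\psi(b)$) to see that multiplication by $\sigma(r,r)t^{-r}b_1$ and by $\psi(b_1)\sqrt{\sigma(r,r)}$ have the same effect on the relevant weight space, which is what aligns the $T'$-side with the $I$-side. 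Once the correspondence of relations is secured, the map is a linear bijection intertwining the two bracket structures, and the Lemma follows.
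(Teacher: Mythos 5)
The paper states this lemma with no proof at all, so your argument has to stand on its own. The homomorphism half of your proposal is correct and is surely the intended content: since $D(u,r)$ is linear in $u$, the term $T'(w,r+s,b_1b_3,b_2b_4)$ with $w=(u,s)v-(v,r)u$ expands to $(u,s)T'(v,r+s,b_1b_3,b_2b_4)-(v,r)T'(u,r+s,b_1b_3,b_2b_4)$, and then the $T'$-bracket matches the image under $\eta$ of the stated $I$-bracket term by term. You are also right that the genuine issue is well-definedness of $\eta$, because both families are merely spanning sets and the formulas defining $I(u,r,b_1,b_2)$ and $T'(u,r,b_1,b_2)$ are different in kind (one lives in $\tau$, the other in $U_1$).

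The mechanism you offer for that step, however, does not work as stated. For $r\neq 0$ the operator $\sigma(r,r)t^{-r}b_1$ sends $V_k$ to $V_{k-r}$, so it cannot ``have the same effect on the relevant weight space'' as the scalar $\psi(b_1)\sqrt{\sigma(r,r)}$; no identity of that shape holds on any single weight space. What is actually true splits into two separate facts that you have conflated: (i) $t^{-r}b_1$ acts on $V$ as $\psi(b_1)t^{-r}$ (from associativity of $\mathbb{C}_q^{(1)}\otimes B$ and $t^0\otimes b$ acting by $\psi(b)$), which reduces the $b_1$-dependence of $T'(u,r,b_1,b_2)$ to $\psi(b_1)$ and $b_1b_2$, matching $I(u,r,b_1,b_2)$; and (ii) the remaining discrepancy between $\sigma(r,r)t^{-r}D(u,r)b_2$ and $\sqrt{\sigma(r,r)}D(u,r)b_2$ disappears only on the quotient $V/W$, where $\sqrt{\sigma(m,m)}t^{m}$ becomes the identity by the definition of $W$ --- that identification is the content of Lemma \ref{lemma 3.5} and the isomorphisms of Lemma \ref{lemma 3.3}, not something available inside a weight space of $V$. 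Moreover, if the lemma is read as a statement about the abstract spans $I\subset\tau$ and $T'\subset U_1$, you still owe a direct comparison of linear relations: for instance $I(u,r,b_1,b_2)=-D(u,0)\otimes b_1b_2$ is independent of $r$ whenever $\psi(b_1)=0$, and it is not obvious that $T'(u,r,b_1,b_2)$ degenerates the same way in $U_1$ rather than merely in its action on $V$. So the bracket computation is fine, but the injectivity/well-definedness step needs to be redone along these lines.
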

Let $W=\textit{Span}\{\sqrt{\sigma(m,m)}t^m.v-v:\; m\in \mathbb{Z}^n,v\in V_0\}=\textit{Span}\{\sqrt{\sigma(m,m)}t^m.v-v:\; m\in \mathbb{Z}^n,v\in V\}$. It can be easily checked that $W$ is a $T^{\prime}$-submodule of $V$. Therefore $V/W$ is a $T^{\prime}$-module.
\begin{lemma}\label{lemma 3.5}
$V/W\cong V_0$ as $T^{\prime}$-module.
\end{lemma}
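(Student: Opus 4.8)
The plan is to exhibit an explicit $T'$-module map $\Phi : V \to V_0$ whose kernel is exactly $W$, and then invoke the first isomorphism theorem. The natural candidate is the map that collapses the weight-space decomposition onto the zeroth piece via the $t^m$-action: for $v = \sum_m v_m$ with $v_m \in V_m$, set
\begin{equation*}
\Phi(v) = \sum_m \sqrt{\sigma(m,m)}\,t^{-m}.v_m \in V_0 .
\end{equation*}
Here $\sqrt{\sigma(m,m)}\,t^{-m}$ sends $V_m$ into $V_0$ because $t^{-m}$ shifts the $\mathbb{Z}^n$-grading by $-m$. The normalising scalars $\sqrt{\sigma(m,m)}$ are forced by the definition of $W$ and by the associativity relation $t^r b_1 t^s b_2 = \sigma(r,s) t^{r+s} b_1 b_2$; they are chosen precisely so that $\Phi$ is idempotent on $V_0$ (on $V_0$ it is the identity) and so that the operators $\sqrt{\sigma(m,m)}\,t^m$ behave like a projective-to-honest representation of $\mathbb{Z}^n$.

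First I would check $\Phi(W)=0$, which is essentially immediate from the definition $W=\operatorname{Span}\{\sqrt{\sigma(m,m)}\,t^m.v - v\}$ together with the relation $t^{-m}t^m = \sigma(-m,m)t^0 = \sigma(-m,m)\,\mathrm{id}$ and the identity $\sigma(m,m)\sigma(-m,m)=1$ (from $t^m t^{-m} t^m$, using property (6)); this makes $\Phi$ act as the identity after the $t^m$-twist, so the generators of $W$ map to zero. Next I would verify $\ker\Phi \subseteq W$: if $\Phi(v)=0$ then $\sum_m \sqrt{\sigma(m,m)}\,t^{-m}.v_m = 0$, and rewriting each $v_m = \sqrt{\sigma(m,m)}\,t^m.\bigl(\sqrt{\sigma(m,m)}\,t^{-m}.v_m\bigr)$ modulo $W$ (again using $\sigma(m,m)\sigma(-m,m)=1$) shows $v \equiv \sum_m \sqrt{\sigma(m,m)}\,t^m.w_m \pmod{W}$ with $\sum w_m = \Phi(v)=0$, forcing $v\in W$. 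Since $\Phi$ is clearly surjective (it is the identity on $V_0$), this gives the linear isomorphism $V/W \cong V_0$.

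The substantive step, and the one I expect to be the main obstacle, is verifying that $\Phi$ is a $T'$-module homomorphism, i.e. that $\Phi(x.v) = x.\Phi(v)$ for every generator $x = T'(u,r,b_1,b_2) = T(u,r,b_1,b_2) - D(u,0)b_1b_2$. This requires commuting the operator $\sqrt{\sigma(m,m)}\,t^{-m}$ past $T'(u,r,b_1,b_2)$, which amounts to computing brackets such as $[T'(u,r,b_1,b_2), t^{-m}]$ and controlling the grading shifts. The key computational input is the observation already recorded in the proof of Lemma \ref{lemma 3.3}, namely $[T'(u,m,b_1,b_2), t^{(r-s)}] = 0$; more generally the action of $T'$ is built so that its $D(u,r)$-part acts diagonally on weights via $(u, \text{weight}+\alpha)$ and this diagonal piece is cancelled by the $-D(u,0)b_1b_2$ correction term. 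I would organise this by checking the intertwining relation separately on each homogeneous summand $V_m$, using the definition of the $\tau$-action and the associativity/anti-associativity hypotheses to move the $t^{\pm m}$ factors through, and the expected payoff is that all weight-dependent scalars cancel, leaving $\Phi(T'(u,r,b_1,b_2).v) = T'(u,r,b_1,b_2).\Phi(v)$ identically.
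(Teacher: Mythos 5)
Your proposal is correct; note that the paper itself states Lemma \ref{lemma 3.5} with no proof at all, so there is nothing to compare against except the intended (standard) argument, which is exactly the one you give: the twisted projection $\Phi(v)=\sum_m\sqrt{\sigma(m,m)}\,t^{-m}.v_m$ onto $V_0$, with kernel $W$. Your normalisation is right, since $\sigma(-m,m)=\sigma(m,m)^{-1}$ gives $\sqrt{\sigma(m,m)}\,t^{-m}\circ\sqrt{\sigma(m,m)}\,t^{m}=\mathrm{id}$ on $V_0$, and the kernel and surjectivity arguments go through as you describe. The one step you leave as an ``expected payoff'' does in fact close: since $T'(u,r,b_1,b_2)$ preserves each $V_m$, equivariance reduces precisely to $[T'(u,r,b_1,b_2),t^{k}]=0$ on $V$, and a short computation using $[t^{-r},t^{k}]=0$ for $r\in\mathrm{rad}\,f$ and the associativity relation $t^{-r}b_1t^{r+k}b_2=\sigma(-r,r+k)t^{k}b_1b_2$ gives $[T(u,r,b_1,b_2),t^{k}]=(u,k)t^{k}b_1b_2=[D(u,0)b_1b_2,t^{k}]$, so the two pieces of $T'$ cancel; this is the same identity the paper invokes without proof in Lemma \ref{lemma 3.3}. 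I would recommend writing that bracket computation out explicitly rather than gesturing at it, but there is no gap in the approach.
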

Now using Lemma \ref{lemma 3.5}, Lemma \ref{lemma 3.4} and Proposition \ref{proposition 3.2}(4) we can say that $V/W$ is irreducible $I$-module.\\
As in \cite{[4]}, consider the space $\widetilde {D}=\textit{span}\{\psi(b_1)D(u,r)b_2-D(u,r)b_1b_2:r\in \textit{rad} \;f,u\in \mathbb{C}^n,b_1,b_2\in B\}$. Then we can see that $\widetilde {D}\subset I$ and $\widetilde{D}=\textit{span}\{D(u,r):u\in \mathbb{C}^n,r\in \textit{rad}\;f\}\otimes M$.
Now as in Lemma 3.3 of \cite{[4]}, we can prove that $D(u,0)b$ acts as scalar (say, $\lambda(u,b)$) on $V_0$ for all $u\in \mathbb{C}^n,b\in B$. We can easily check that $t^k.V_0=V_k,\;\forall k\in \mathbb{Z}^n$. Therefore we have the following for $u\in \mathbb{C}^n,\;k,m\in \mathbb{Z}^n,b\in B,\;v_0\in V_0$.\\
\begin{equation}\label{equation 3.2}
D(u,0)b.t^k.v_0=\{\lambda(u,b)+(u,k)\psi(b)\}t^k.v
\end{equation}
\begin{equation}
t^mb.t^k.v_0=\psi(b)t^m.t^kv_0.
\end{equation}
Now we consider the action of $T^{\prime}(u,r,1,b)$ on $V$ for all $u\in \mathbb{C}^n,\;r(\neq 0)\in \textit{rad}\;(f),b\in B$.
\begin{equation}
T^{\prime}(u,r,1,b).t^k.v_0=\{\sigma(r,r)t^{-r}D(u,r)b-D(u,0)b\}.t^k.v_0
\end{equation}
Now using the claim of Lemma \ref{lemma 3.3} and equation (\ref{equation 3.2}), we will have
\begin{equation}\label{equation 3.5}
D(u,r)b.t^k.v_0=T^{\prime}(u,r,1,b)t^r.t^k.v_0+\{\lambda(u,b)+(u,k)\psi(b)\}t^r.t^k.v_0
\end{equation}
In particular for $b=1$ in equation (\ref{equation 3.5}), we have
\begin{equation}\label{equation 3.6}
D(u,r).t^k.v_0=T^{\prime}(u,r,1,1)t^r.t^k.v_0+(u,k+\alpha)t^r.t^k.v_0
\end{equation}
Now as in \cite{[4]}, for any $r(\neq 0)\in \textit{rad}(f)$, choose $v\in \mathbb{C}^n$ such that $(v,r)\neq 0$ . Now let us consider
\begin{equation}\label{equation 3.7}
[D(v,0)b,D(u,r)]=(v,r)D(u,r)b
\end{equation}
Then using the action of equation \ref{equation 3.2},\ref{equation 3.5} and \ref{equation 3.6} on both sides of \ref{equation 3.7} we have
\begin{equation}\label{equation 3.8}
T^{\prime}(u,r,1,b)=\psi(b)T^{\prime}(u,r,1,1)+\psi(b)(u,\alpha)-\lambda(u,b)
\end{equation}
From (\ref{equation 3.5}), (\ref{equation 3.6}) and (\ref{equation 3.8}) we can see $D(u,r)b=\psi(b)D(u,r)$ on $V$. Infact as in \cite{[4]}, we have $\lambda(u,b)=\psi(b)(u,\alpha)$
\begin{lemma}\label{lemma 3.6}
$\textit{ad}\;t^sb=\psi (b)\textit{ad}\;t^s$ on $V$ for all $s \notin \textit{rad}(f)$.
\end{lemma}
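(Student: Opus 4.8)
The plan is to obtain the identity for $\mathrm{ad}\,t^s$ from a single mixed bracket relation in $\mathfrak g$, namely the $r=0$ case of $[D(u,r),\mathrm{ad}\,t^s]=(u,s)\sigma(r,s)\mathrm{ad}\,t^{r+s}$. Since $0\in\mathrm{rad}\,f$ and $\sigma(0,s)=1$, this reads $[D(u,0),\mathrm{ad}\,t^s]=(u,s)\,\mathrm{ad}\,t^s$. Tensoring $D(u,0)$ with an arbitrary $b\in B$ and $\mathrm{ad}\,t^s$ with $1$, and using that the bracket in $\tau$ is computed from the bracket in $\mathfrak g$ via $[x\otimes b,\,y\otimes 1]=[x,y]\otimes b$, the corresponding relation in $\tau$ is
\[ [D(u,0)b,\ \mathrm{ad}\,t^s]=(u,s)\,\mathrm{ad}\,t^s b. \]
The point is that the right-hand side is exactly the operator we wish to control.

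First I would evaluate the left-hand side on $V$ using the fact, established in the paragraph immediately preceding this lemma, that $D(u,0)b=\psi(b)D(u,0)$ as operators on $V$ (this is the $r=0$ instance of $D(u,r)b=\psi(b)D(u,r)$). Since $\psi(b)$ is a scalar it pulls out of the commutator, so on $V$
\[ [D(u,0)b,\ \mathrm{ad}\,t^s]=\psi(b)\,[D(u,0),\ \mathrm{ad}\,t^s]=\psi(b)(u,s)\,\mathrm{ad}\,t^s. \]
Comparing the two expressions for this one operator gives $(u,s)\,\mathrm{ad}\,t^s b=\psi(b)(u,s)\,\mathrm{ad}\,t^s$ on $V$.

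Finally, since $s\notin\mathrm{rad}\,f$ we in particular have $s\neq 0$ (because $0\in\mathrm{rad}\,f$), so there exists $u\in\mathbb C^n$ with $(u,s)\neq 0$; cancelling this scalar yields $\mathrm{ad}\,t^s b=\psi(b)\,\mathrm{ad}\,t^s$ on $V$, which is the claim.

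I do not expect a serious obstacle here: the argument is essentially a two-line commutator computation once $D(u,0)b=\psi(b)D(u,0)$ is in hand. The only points that need a moment's care are the identification of the $\tau$-bracket with the operator commutator on $V$, and the existence of a $u$ with $(u,s)\neq 0$; both are immediate.
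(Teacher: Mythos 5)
Your argument is correct, but it is not the route the paper takes. The paper's proof is a one-line computation that never touches $D(u,0)b$: it splits $\textit{ad}\;t^s b=(\textit{ad}\;t^s-t^s)b+t^sb$, applies the anti-associativity hypothesis for $\mathbb{C}_q^{(2)}\otimes B$ with the degree-zero element $(\textit{ad}\;t^0-t^0)\otimes b=-1\otimes b$ (which acts by $-\psi(b)$) to get $(\textit{ad}\;t^s-t^s)b=\psi(b)(\textit{ad}\;t^s-t^s)$, and the associativity hypothesis for $\mathbb{C}_q^{(1)}\otimes B$ to get $t^sb=\psi(b)t^s$, then adds. You instead bracket $\textit{ad}\;t^s$ against $D(u,0)b$ and invoke the identity $D(u,0)b=\psi(b)D(u,0)$ on $V$ established in the discussion just before the lemma; choosing $u$ with $(u,s)\neq 0$ (possible since $s\notin\text{rad}\,f$ forces $s\neq 0$) then cancels the scalar. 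Both proofs are sound. The paper's is self-contained modulo the standing module hypotheses and the scalar action of $1\otimes B$, whereas yours leans on the comparatively laborious analysis of the operators $D(u,r)b$ that precedes the lemma; on the other hand, your commutator trick is more general, since the same computation shows $xb=\psi(b)x$ on $V$ for any weight vector $x\in\mathfrak g$ of nonzero weight, without needing the associativity or anti-associativity assumptions at that point.
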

\begin{proof}
By our assumption, the action of  $\mathbb{C}_q^{(1)}\otimes B$ and $\mathbb{C}_q^{(2)}\otimes B$ are associative and anti-associative respectively. Therefore $\textit{ad}\;t^sb=(\textit{ad}\;t^s-t^s)b+t^sb=-(-Ib).(\textit{ad}\;t^s-t^s)+\psi(b)t^s=\psi(b)\textit{ad}\;t^s$
\end{proof}
\begin{theorem}\label{theorem 3.1}
Let $V$ be an irreducible $\tau$-module with finite dimensional weight spaces. Also assume the actions of  $\mathbb{C}_q^{(1)}\otimes B$ and $\mathbb{C}_q^{(2)}\otimes B$ are associative and anti-associative respectively and $t^0$ acts as identity on $V$. Then $V$ is an irreducible module for $\mathfrak{g}$.
\end{theorem}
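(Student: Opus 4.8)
\smallskip

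The plan is to show that the action of $\tau=\mathfrak{g}\otimes B$ on $V$ reduces, on each generator, to a scalar multiple of the action of $\mathfrak{g}\otimes 1$, so that the $\tau$-submodules and the $\mathfrak{g}$-submodules of $V$ are literally the same subspaces; irreducibility then transfers immediately.

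First I would recall that $\mathfrak{g}=\mathbb{C}_q\rtimes\text{Der}\,\mathbb{C}_q$ is spanned by the monomials $t^r$ with $r\in\mathbb{Z}^n$, the derivations $\text{ad}\,t^s$ with $s\notin\text{rad}\,f$, and the derivations $D(u,r)$ with $r\in\text{rad}\,f$ and $u\in\mathbb{C}^n$; consequently $\tau$ is spanned by the elements $xb$ for such $x$ and $b\in B$. The work preceding the theorem already handles each type of generator. The identity $t^mb\,t^k v_0=\psi(b)\,t^m t^k v_0$ together with $V=\sum_{k}t^k V_0$ gives $t^rb=\psi(b)t^r$ on $V$ for every $r\in\mathbb{Z}^n$; Lemma~\ref{lemma 3.6} gives $\text{ad}\,t^sb=\psi(b)\,\text{ad}\,t^s$ for $s\notin\text{rad}\,f$; and the paragraph culminating in (\ref{equation 3.8}) gives $D(u,r)b=\psi(b)D(u,r)$ for every $r\in\text{rad}\,f$, in particular $D(u,0)b=\psi(b)D(u,0)$. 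Hence for every spanning element $x$ of $\mathfrak{g}$ and every $b\in B$ one has $xb=\psi(b)\,(x\otimes 1)$ as operators on $V$, and by linearity this equality of operators holds for all of $\tau$.

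Next I would identify $\mathfrak{g}$ with the subalgebra $\mathfrak{g}\otimes 1\subseteq\tau$ and compare images in $\text{End}(V)$. Since $\psi(1)=1$, the operators coming from $\mathfrak{g}\otimes 1$ are among those coming from $\tau$, while the previous step shows every operator $x\otimes b$ is the scalar multiple $\psi(b)$ of $x\otimes 1$; thus the subspace of $\text{End}(V)$ spanned by the $\tau$-operators coincides with that spanned by the $\mathfrak{g}\otimes 1$-operators.

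Finally, this coincidence means a subspace $W\subseteq V$ is $\tau$-invariant if and only if it is $\mathfrak{g}\otimes 1$-invariant. As $V$ is $\tau$-irreducible its only $\tau$-invariant subspaces are $0$ and $V$, hence the same is true for $\mathfrak{g}$-invariant subspaces, and $V$ is irreducible over $\mathfrak{g}$. There is essentially no remaining obstacle, since all computational content sits in the earlier lemmas; the only point demanding care is to confirm that the scalar reductions $xb=\psi(b)(x\otimes 1)$ genuinely cover \emph{every} generator of $\mathfrak{g}$---notably the degree-zero derivations $D(u,0)$ and the central monomials $t^r$ with $r\in\text{rad}\,f$---so that no piece of the $\tau$-action escapes the identification.
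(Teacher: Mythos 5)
Your proposal is correct and follows essentially the same route as the paper, whose proof of this theorem is simply to invoke the preceding discussion (which establishes $t^rb=\psi(b)t^r$ and $D(u,r)b=\psi(b)D(u,r)$ on $V$) together with Lemma~\ref{lemma 3.6}; you merely spell out the standard final step that the operator identities $xb=\psi(b)(x\otimes 1)$ force the $\tau$-submodules and $\mathfrak{g}$-submodules to coincide.
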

\begin{proof}
By previous discussion and Lemma \ref{lemma 3.6} we can see it.
\end{proof}
\begin{theorem}\label{theorem 3.2}
Let $V$ be an irreducible $\tau$-module with finite dimensional weight spaces.Also assume the actions of  $\mathbb{C}_q^{(1)}\otimes B$ and $\mathbb{C}_q^{(2)}\otimes B$ are associative and anti-associative respectively and $t^0$ acts as identity on $V$. Then $V\cong F^{\alpha}(V,\psi)$.
\end{theorem}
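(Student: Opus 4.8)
The plan is to deduce the statement from the non-loop classification Theorem \ref{theorem 2.1} together with Theorem \ref{theorem 3.1}. First I would regard $V$ as a module over $\mathfrak{g}$ via the Lie algebra embedding $\mathfrak{g}\hookrightarrow\tau$, $x\mapsto x\otimes 1$. By Theorem \ref{theorem 3.1} this restricted module is irreducible, so it remains to check that it satisfies all the hypotheses of Theorem \ref{theorem 2.1}: the weight space decomposition $V=\oplus_{r\in\mathbb{Z}^n}V_r$ furnishes the $\mathbb{Z}^n$-grading, the weight spaces are finite dimensional by assumption, setting $b_1=b_2=1$ in the associativity (resp. anti-associativity) relations shows that $\mathbb{C}_q^{(1)}\otimes 1$ acts associatively (resp. $\mathbb{C}_q^{(2)}\otimes 1$ anti-associatively), and $t^0$ acts as the identity. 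Theorem \ref{theorem 2.1} then yields a finite dimensional irreducible $\mathfrak{gl}_n$-module $V'$ and an isomorphism of $\mathfrak{g}$-modules $\Phi: V\to F^{\alpha}(V')$, where $\alpha$ is forced to coincide with the $\alpha$ appearing in the weight space decomposition of $V$.

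Second, I would promote this $\mathfrak{g}$-module isomorphism to a $\tau$-module isomorphism onto $F^{\alpha}(V',\psi)$. The crucial input is that on $V$ the action of $B$ is governed entirely by the homomorphism $\psi$: the relation $D(u,r)b=\psi(b)D(u,r)$ derived from (\ref{equation 3.5}), (\ref{equation 3.6}) and (\ref{equation 3.8}), Lemma \ref{lemma 3.6} giving $\textit{ad}\;t^sb=\psi(b)\textit{ad}\;t^s$, and the relation $t^mb=\psi(b)t^m$ obtained from the associativity of the $\mathbb{C}_q^{(1)}\otimes B$-action, together show that every generator $x\otimes b$ of $\tau$ acts on $V$ as $\psi(b)$ times $x\otimes 1$. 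On the other hand, inspecting the defining formulas of $F^{\alpha}(V',\psi)$ shows that $x\otimes b$ acts there as exactly $\psi(b)$ times the underlying $\mathfrak{g}$-action on $F^{\alpha}(V')$. Hence both $\tau$-actions are the same ``$\psi$-twist'' of their respective $\mathfrak{g}$-actions, and since $\Phi$ already intertwines the $\mathfrak{g}$-actions it automatically intertwines the full $\tau$-actions. Therefore $V\cong F^{\alpha}(V',\psi)$ as $\tau$-modules, which is the desired conclusion.

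The genuine content has already been expended in Section \ref{Section 3}: the extraction of $\psi$, Theorem \ref{theorem 3.1}, and the scalar relations for the $B$-action. Consequently I expect the main obstacle here to be organisational rather than computational, namely to verify carefully that the uniform twist description $x\otimes b=\psi(b)(x\otimes 1)$ holds on the full set of generators $\{D(u,r),\textit{ad}\;t^s,t^m\}$ of $\mathfrak{g}\otimes B$ (so that no generator of $\tau$ escapes the reduction to the $\psi$-twisted $\mathfrak{g}$-action), and to match the parameter $\alpha$ and the $\mathfrak{gl}_n$-module produced by Theorem \ref{theorem 2.1} with those in the statement. Once these identifications are pinned down, the isomorphism follows formally.
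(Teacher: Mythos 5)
Your proposal is correct and follows essentially the same route as the paper: the paper's (very terse) proof likewise combines Theorem \ref{theorem 3.1} with Theorem \ref{theorem 2.1}, the point being that the relations $D(u,r)b=\psi(b)D(u,r)$, $\textit{ad}\;t^sb=\psi(b)\textit{ad}\;t^s$ and $t^mb=\psi(b)t^m$ reduce the $\tau$-action to a $\psi$-twist of the $\mathfrak{g}$-action, to which the non-loop classification applies. Your write-up merely makes explicit the bookkeeping (checking the hypotheses of Theorem \ref{theorem 2.1} and matching $\alpha$) that the paper leaves implicit.
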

\begin{proof}
From Theorem \ref{theorem 2.1} and Theorem \ref{theorem 3.1} we have this isomorphism.
\end{proof}

\end{document}